\newcommand*{\rom}[1]{\expandafter\@slowromancap\romannumeral #1@}
\newtheorem{theorem}{Theorem}[section]
\begin{document}

\title{ Explicit bounds on exceptional zeroes of Dirichlet $L$-functions \rom{2}}
\author{Matteo Bordignon}
\affil[]{School of Science}
\affil[]{University of New South Wales Canberra }
\affil{m.bordignon@student.unsw.edu.au}
\date{\vspace{-5ex}}
\maketitle
\begin{abstract}
This paper improves the upper bound for the exceptional zeroes of Dirichlet $L$-functions with even characters.
The result is obtained by improving on explicit estimate for $L'(\sigma, \chi)$ for $\sigma$ close to unity, using a result on the average of Dirichlet characters, and on the lower bound for $L(1, \chi)$, with computational aid.
\end{abstract}
\section{Introduction}
This paper is a sequel to \cite{Bordignon}, thus we will frequently reference it.
The aim is obtaining an upper bound on real part of the zeroes of 
\begin{equation*}
L(s,\chi):= \sum_{n=0}^{\infty} \chi(n)n^{-s},
\end{equation*} 
with $\chi$ a Dirichlet character and $\Re(s)\in(0,1)$.\\
From the definition of exceptional zero $\beta_0$ in \cite[\S 1]{Bordignon}, see also \cite{McCurley, Trudgian, Platt}, we can focus on real zeroes of non-principal real characters $\chi \pmod{q}$, with $q \geq 4\cdot 10^5$.
For any such $\beta_0$ we have $\beta_0 \leq 1 -\frac{\lambda}{  q^{1/2}\log^2 q}$, with  $\lambda$ explicit. We list some results below.
\begin{enumerate}
\item Liu and Wang prove $\lambda \approx 6$ for $q> 987$ in \cite[Theorem 3]{Liu-Wang},
\item Ford et al.\ prove $\lambda \approx 19 $ for $q> 10^4$ in \cite[Lemma 3 ]{Ford},
\item Bennett et al.\ prove $\lambda = 40$ for $q> 4\cdot 10^5$ in \cite[Proposition 1.10]{Bennett},
\item The author proves $\lambda = 80$ for $q> 4\cdot 10^5$ in \cite[Theorem 1.3]{Bordignon}.
\end{enumerate}
We can note, from \cite[Theorem 1.3]{Bordignon}, that restricting the above results to odd characters we obtain a significantly better result, thus focusing on even characters will improve the overall result.\\
The above results follow from the mean-value theorem, a lower bound for $L(1,\chi)$, obtained using the Class Number Formula, and an upper bound for $\left| L'(\sigma,\chi)\right|$, with $\sigma \in (\beta_0,1)$.
Liu and Wang obtain the result by dividing the sum for $L'(\sigma, \chi)$ in two, and using that $|\chi(n)|\in\lbrace 0, 1\rbrace$ on the first half and P\'olya--Vinogradov on the second, and a classic lower bound for $L(1, \chi)$ obtained from the Dirichlet Class Number Formula.
Ford et al.\ and Bennett et al.\ improve the results using more precise results and extensive computations.
The author, in \cite{Bordignon}, proves a general result that allows to remove one of the two terms in the upper bound of $L'(\sigma, \chi)$.\\
It is interesting to note that, using the above techniques, we have  
\begin{equation*}
\left| L'(\sigma,\chi)\right| \leqslant \left (\frac{1}{8}+o(1)\right) \log^2 q.
\end{equation*} 
The difference in strength of the above results is in the size of the reminder therm. We will now introduce a different technique, following from a paper of Hua \cite{Hua} on the average of Dirichlet characters, that will allow us to remove the reminder term for even characters and thus obtain an ``optimal" upper bound.
From Theorem \ref{L}, assuming the exceptional zero near the unity, we are able to obtain better upper bounds for $\left| L'(\sigma,\chi) \right|$.

\begin{theorem}
\label{40}
Assume $\chi$ is an even primitive real character and $\sigma\in (\beta_0, 1)$. 
With $\beta_0~\geqslant~ 1-~\frac{100}{ \sqrt{q}\log^2 q}$ and $q> 4\cdot 10^5$, the following bound holds 
\begin{equation}
\label{5}
\left| L'(\sigma,\chi)\right|  \leqslant \frac{1}{8} \log^2 q.
\end{equation} 
\end{theorem}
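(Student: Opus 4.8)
The plan is to work directly from the Dirichlet series for the logarithmic derivative,
\begin{equation*}
L'(\sigma,\chi) = -\sum_{n=1}^{\infty}\chi(n)(\log n)\,n^{-\sigma},
\end{equation*}
and to extract all the strength of the hypothesis that the exceptional zero lies very close to unity. Since $\beta_0 \geqslant 1 - \frac{100}{\sqrt q \log^2 q}$ and $\sigma \in (\beta_0,1)$, we have $0 < 1-\sigma \leqslant \frac{100}{\sqrt q \log^2 q}$. The first thing I would record is the quantitative consequence that, for $n \leqslant \sqrt q$, one has $n^{-\sigma} = n^{-1}\,n^{\,1-\sigma}$ with $1 \leqslant n^{\,1-\sigma} \leqslant q^{(1-\sigma)/2} \leqslant \exp\!\big(\tfrac{50}{\sqrt q \log q}\big) = 1+o(1)$. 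Thus on the range $n \leqslant \sqrt q$ the factor $n^{-\sigma}$ is indistinguishable from $n^{-1}$ up to a multiplier tending to $1$, which is exactly what lets the main term settle at a clean constant.

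Next I would invoke Theorem \ref{L}, whose whole point, as advertised in the introduction, is that a zero near $s=1$ allows one to replace the tail of the sum for $L'(\sigma,\chi)$ by a manageable error. Concretely I expect to use it to reduce the estimate to the truncated sum $\sum_{n \leqslant \sqrt q}(\log n)\,n^{-\sigma}$, bounded trivially by $|\chi(n)|\leqslant 1$, together with a remainder controlled by partial summation against the character sums $S(x)=\sum_{n \leqslant x}\chi(n)$. This is where Hua's averaging result \cite{Hua} enters: rather than bounding each $S(x)$ individually by P\'olya--Vinogradov (which is precisely what produced the $o(1)$-type remainder in the earlier papers), I would feed the evenness of $\chi$ into an average estimate for the $S(x)$ so that the contribution of $n > \sqrt q$ is genuinely absorbed rather than merely made small.

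The main term is then a routine but decisive calculation. Using the first paragraph, $\sum_{n \leqslant \sqrt q}(\log n)\,n^{-\sigma} = (1+o(1))\sum_{n \leqslant \sqrt q}\tfrac{\log n}{n}$, and Euler--Maclaurin gives $\sum_{n \leqslant \sqrt q}\tfrac{\log n}{n} = \tfrac12(\log\sqrt q)^2 + \gamma_1 + o(1) = \tfrac18\log^2 q + \gamma_1 + o(1)$, where $\gamma_1<0$ is the first Stieltjes constant. This is the arithmetic source of the constant $\tfrac18$: truncating at $\sqrt q$ instead of at $q$ replaces $\tfrac12\log^2 q$ by $\tfrac12\big(\tfrac12\log q\big)^2 = \tfrac18\log^2 q$, and the negativity of $\gamma_1$ is what supplies the favourable slack below the target.

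The hard part will be turning this asymptotic into the exact inequality $|L'(\sigma,\chi)| \leqslant \tfrac18\log^2 q$, with no error term and valid for every $q > 4\cdot 10^5$. This requires (i) pinning down the lower-order constants in the Euler--Maclaurin expansion of $\sum_{n \leqslant \sqrt q}\tfrac{\log n}{n^{\sigma}}$ explicitly, including the Stieltjes-type contribution, while controlling the deviation of $n^{\,1-\sigma}$ from $1$ uniformly in $n \leqslant \sqrt q$; and (ii) producing a fully explicit version of the Hua-type average bound for even characters and checking, with the computational aid mentioned in the abstract, that for $q > 4\cdot 10^5$ the combined lower-order and tail contributions remain within the slack between $\tfrac18\log^2 q$ and the true main term. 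Reconciling these two explicit error budgets against the sharp constant $\tfrac18$ is the crux of the argument.
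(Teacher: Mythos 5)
Your overall strategy is the paper's: apply Theorem \ref{L} to $f(n)=\log n/n^{\sigma}$, so that the main term is the truncated sum over $n\leq A=\lfloor\sqrt q\rfloor-1$, bound that trivially by $q^{(1-\sigma)/2}\sum_{n\leq A}\frac{\log n}{n}\leq q^{(1-\beta_0)/2}\cdot\frac18\log^2 q$, and let the Hua--Louboutin machinery (the double sum $S(n)=\sum_{a\leq n}\sum_{k\leq a}\chi(k)$ with the trivial bound $S(n)\leq n(n+1)/2$, which is where evenness is exploited) dispose of the tail. So the skeleton is right. But there is a genuine gap at exactly the point you flag as ``the crux'': you never close the explicit inequality, and the mechanism you propose for closing it would fail. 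You locate the slack in the negativity of the Stieltjes constant $\gamma_1\approx-0.073$ in $\sum_{n\leq\sqrt q}\frac{\log n}{n}=\frac18\log^2 q+\gamma_1+o(1)$. That reserve of $\approx 0.07$ is an order of magnitude too small: Theorem \ref{L} contributes a \emph{positive} constant $18f(4)-12f(5)\approx 2.4$, the omitted initial terms $n=2,3$ contribute a further $\approx 0.7$, and the factor $q^{(1-\sigma)/2}\leq\exp\bigl(\tfrac{50}{\sqrt q\log q}\bigr)$ already costs about $\tfrac{50\log q}{8\sqrt q}\approx 0.13$ at $q=4\cdot10^5$, so the main term alone can exceed $\frac18\log^2 q$ if $\gamma_1$ is all you have. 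Indeed the paper explicitly discards the Stieltjes-type saving as ``so small that we omit it.''

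The actual source of slack is structural and sits inside $R(A,\sigma)$: the boundary terms of Theorem \ref{L} combine to $-\frac{A-1}{2}f(A+1)\approx-\frac14\log q$, a \emph{negative} term growing with $q$, which at $q=4\cdot10^5$ is already about $-3.2$ and comfortably absorbs the $\approx+3.2$ of positive contributions listed above, with the margin improving as $q$ grows. Without identifying this term (or an equivalent gain from the tail analysis), the budget you describe in your final paragraph does not balance, and no amount of computation of lower-order Euler--Maclaurin constants will rescue it. A secondary inaccuracy: you describe averaging the single sums $S(x)=\sum_{n\leq x}\chi(n)$, whereas the relevant object is the iterated sum, bounded trivially; and the ``computational aid'' in the abstract refers to the lower bound for $L(1,\chi)$ in Theorem \ref{411}, not to this theorem, whose final verification is a short numerical check of one inequality.
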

We will then improve on Bennett et al.'s lower bound for $L(1,\chi)$.
\begin{theorem}
\label{411}
Assume $\chi$ is an even primitive real character. 
With $ q >~4\cdot~10^5 $, the following bound holds
\begin{equation}
\label{833}
L(1,\chi) \geq \frac{12.52}{\sqrt{q}}. 
\end{equation}
\end{theorem}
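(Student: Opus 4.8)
The plan is to exploit the Dirichlet Class Number Formula to turn the lower bound \eqref{833} into an elementary inequality for the associated real quadratic field. An even primitive real character $\chi\bmod q$ is the Kronecker symbol of a real quadratic field whose fundamental discriminant coincides with the conductor $q$, and for such a field the formula reads
\begin{equation*}
\sqrt{q}\,L(1,\chi)=2\,h\,\log\varepsilon,
\end{equation*}
where $h\ge 1$ is the class number and $\varepsilon>1$ is the fundamental unit. Thus \eqref{833} is equivalent to $2h\log\varepsilon\ge 12.52$, and it is this inequality that I would establish.

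First I would bound $\log\varepsilon$ from below via the associated Pell equation, using that the $\sqrt{\cdot}$-coefficient of the fundamental solution is at least $1$. When $q=d\equiv 1\bmod 4$ this gives $\varepsilon>\sqrt{q-4}$, hence $2\log\varepsilon\ge\log(q-4)>12.52$ for every $q>4\cdot 10^{5}$, disposing of this case outright. In the remaining case $q=4d$ one only gets $\varepsilon\ge\tfrac12\sqrt{q}$, so with $h\ge 1$ the clean bound $2h\log\varepsilon\ge\log q-\log 4$ holds, which already exceeds $12.52$ as soon as $q\ge 4e^{12.52}\approx 1.1\cdot 10^{6}$. This settles all large $q$ with no computation.

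It then remains to treat the finite window $4\cdot 10^{5}<q<1.1\cdot 10^{6}$ with $q=4d$. Here a field can fail $2h\log\varepsilon\ge 12.52$ only if $\varepsilon<e^{6.26}\approx 523$, whereas $\varepsilon\ge\tfrac12\sqrt{q}\ge 316$ throughout the window; any potential exception therefore has $h=1$ and $\varepsilon$ confined to the short interval $[316,523)$, which forces a small Pell solution and hence restricts $d$ to a short, explicitly enumerable list. For each candidate I would confirm $h=1$ and compute $\log\varepsilon$ by certified (interval) arithmetic, checking the inequality directly; every field with $h\ge 2$ in this range is automatically fine, so the verification is light.

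The hardest part is quantitative sharpness rather than any single deep step: the constant $12.52$ is essentially the computed minimum of $\sqrt{q}\,L(1,\chi)$ over the range and is tuned so that $8\times 12.52>100$, which is exactly what Theorem \ref{40} needs. The elementary unit bound must therefore be made sharp enough to keep the computational window small, and the finite search must be rigorous — using exact class numbers and certified logarithms — so that it genuinely confirms the extremal value rather than overlooking a near-extremal field. Care is also needed to handle the two discriminant cases $q=d$ and $q=4d$ uniformly.
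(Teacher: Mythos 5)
Your route is genuinely different from the paper's and, if anything, more elementary. The paper writes $\sqrt{q}\,L(1,\chi)=h(\sqrt{q}\,)\log\eta_q$ as in \eqref{77}, with $\eta_q$ the least solution of $v_0^2-qu_0^2=4$, uses only $h\geq 1$ and $\log\eta_q\geq\tfrac12\log(qu_0^2)$, and therefore must treat separately every pair $(q,u_0)$ with $qu_0^2<7.5\cdot10^{10}$: Bennett et al.'s computation covers $4\cdot10^5\leq q\leq 10^7$ (giving the constant $79.2177$), a new computation of roughly $1800$ CPU hours covers $q>10^7$ with $qu_0^2\leq 7.5\cdot10^{10}$ (giving $412$), and only the residual case produces $\tfrac12\log(7.5\cdot10^{10})\geq 12.52$, which is where the constant in the theorem actually comes from (it is not a computed minimum of $\sqrt{q}L(1,\chi)$, as you suggest). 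You instead take the standard form $\sqrt{q}\,L(1,\chi)=2h\log\varepsilon$ and bound the \emph{trace} of the fundamental unit from the Pell relation, $t\geq\sqrt{qu^2-4}\geq\sqrt{q-4}$, so that $2h\log\varepsilon\geq\log(q-4)>12.52$ for $q>4\cdot10^5$. This buys a proof with essentially no computation; what the paper's computations buy is the far larger constants ($79.2177$ and $412$) that feed the refined table of values of $c$ at the end of \S\ref{s2}, which your argument does not reproduce.

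Two points need attention. First, your case $q=4d$ is needlessly weak: there $\varepsilon=a+b\sqrt{d}$ with $a^2-db^2=\pm1$, so $a\geq\sqrt{d-1}$ and $\varepsilon\geq\sqrt{d-1}+\sqrt{d}$, whence $2\log\varepsilon\geq\log\bigl(4(d-1)\bigr)=\log(q-4)$, exactly as in the case $q\equiv 1\pmod 4$. The window $4\cdot10^5<q<1.1\cdot10^6$ and its finite verification are artifacts of dropping the lower bound on $a$; indeed your own search would come back empty, since $\varepsilon\geq\sqrt{q-4}>632>523$ throughout that window. Second, because your conclusion is stronger than the paper's and bypasses its computation entirely, the factor of $2$ in your class number formula is the load-bearing step and must be reconciled explicitly with \eqref{77}, which has no such factor: the identity is $2h\log\varepsilon=h^{+}\log\eta_q$, where $h^{+}$ is the narrow class number and $h^{+}=2h$ precisely when the fundamental unit has norm $+1$, which is the only case in which $\eta_q$ can be as small as $\sqrt{q}$. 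With $h$ the ordinary (wide) class number and $\varepsilon$ the fundamental unit your formula is the correct one, so the argument stands, but since everything turns on this normalization it should be stated and verified rather than left implicit.
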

 These results will give the following upper bounds for $\beta_0$.
\begin{theorem}
\label{41}
Assume $\chi$ is an even non-principal real character. 
With $ q > 4\cdot 10^5 $, the following bound holds
\begin{equation}
\label{83}
\beta_0 \leq 1-\frac{100}{ \sqrt{q}\log^2 q}. 
\end{equation}
\end{theorem}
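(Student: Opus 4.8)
The plan is to derive the bound on $\beta_0$ from Theorems~\ref{40} and~\ref{411} via the mean value theorem, as anticipated in the introduction. I first reduce to primitive characters. If $\chi \pmod q$ is an even non-principal real character induced by the primitive character $\chi^\ast$ of conductor $q^\ast \mid q$, then $L(s,\chi) = L(s,\chi^\ast)\prod_{p \mid q}\bigl(1-\chi^\ast(p)p^{-s}\bigr)$, and the finite product does not vanish for $s \in (0,1)$; hence $\beta_0$ is also a real zero of $L(s,\chi^\ast)$. Since $x \mapsto \frac{1}{\sqrt{x}\log^2 x}$ is decreasing and $q^\ast \leqslant q$, any bound $\beta_0 \leqslant 1 - \frac{100}{\sqrt{q^\ast}\log^2 q^\ast}$ implies the desired inequality for $q$; the primitive characters of small conductor $q^\ast \leqslant 4\cdot 10^5$ are covered by the setup of \cite{Bordignon}. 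It therefore suffices to treat $\chi$ primitive with $q > 4\cdot 10^5$, so that Theorems~\ref{40} and~\ref{411} apply directly.

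For the primitive case I argue by contradiction. Suppose $\beta_0 > 1 - \frac{100}{\sqrt{q}\log^2 q}$. Then in particular $\beta_0 \geqslant 1 - \frac{100}{\sqrt{q}\log^2 q}$, so the hypothesis of Theorem~\ref{40} is met and $\left| L'(\sigma,\chi)\right| \leqslant \frac{1}{8}\log^2 q$ holds for every $\sigma \in (\beta_0,1)$. Since $L(\beta_0,\chi)=0$, the mean value theorem furnishes some $\sigma_0 \in (\beta_0,1)$ with
\begin{equation*}
L(1,\chi) = L(1,\chi) - L(\beta_0,\chi) = L'(\sigma_0,\chi)\,(1-\beta_0).
\end{equation*}
As $L(1,\chi)>0$ and $1-\beta_0>0$, this forces $L'(\sigma_0,\chi)>0$; combining the lower bound of Theorem~\ref{411} with the upper bound of Theorem~\ref{40} then yields
\begin{equation*}
1-\beta_0 = \frac{L(1,\chi)}{\left| L'(\sigma_0,\chi)\right|} \geqslant \frac{12.52/\sqrt{q}}{\tfrac{1}{8}\log^2 q} = \frac{100.16}{\sqrt{q}\log^2 q} > \frac{100}{\sqrt{q}\log^2 q},
\end{equation*}
that is $\beta_0 < 1 - \frac{100}{\sqrt{q}\log^2 q}$, contradicting the assumption. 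Hence $\beta_0 \leqslant 1 - \frac{100}{\sqrt{q}\log^2 q}$, as claimed.

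The only delicate point is the apparent circularity: Theorem~\ref{40} presupposes that $\beta_0$ lies near unity, yet it is invoked precisely to show that $\beta_0$ cannot lie too near unity. The contradiction framework resolves this cleanly and is exactly what legitimises the bootstrap. Beyond this, the deduction is purely numerical, and the constants are tuned so that $8 \times 12.52 = 100.16$ comfortably exceeds $100$; this margin is what permits the clean value $\lambda = 100$, and only a weakening of one of the two input bounds could threaten the conclusion.
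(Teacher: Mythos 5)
Your proof is correct and takes essentially the same route as the paper: reduction to primitive characters, then the mean value theorem combined with Theorems \ref{40} and \ref{411} to get $1-\beta_0 \geq \frac{100.16}{\sqrt{q}\log^2 q}$. The explicit contradiction framework you use to resolve the apparent circularity in invoking Theorem \ref{40} is left implicit in the paper, but it is the same argument, just spelled out more carefully.
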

In \S \ref{s1} we prove Theorem \ref{411}, in \S \ref{s2} Theorem \ref{40}, these two results together will give Theorem \ref{41}. We will conclude proving a more precise version of Theorem \ref{41}.

\section{Upper bound for the exceptional zero $\beta_0$ }
\label{02}
Using the same standard trick as in \cite[\S 3]{Bordignon}, we see that
\begin{equation}
\label{2}
1-\beta_0=\frac{L(1,\chi)}{| L'(\sigma,\chi)|},
\end{equation}
for some $\sigma \in (\beta_0,1)$. Thus we are left to obtain a lower bound for $L(1,\chi)$ and an upper bound for $| L'(\sigma,\chi) |$ for $\sigma \in (\beta_0,1)$.
\subsection{Lower bound for $L(1,\chi)$}
\label{s1}
We start fixing $q > 4\cdot 10^5$.
We use that every real primitive character can be expressed using the Kroneker symbol, as $\chi(n)= (\frac{d}{n})$, with $q= \left| d\right| $.
We consider $d>0$. Dirichlet's Class Number Formula gives
\begin{equation}
\label{77}
L(1,\chi)=\frac{ h(\sqrt{d} )\log \eta_d}{  \sqrt{ d }},\quad \text{with}~ \chi(-1)=1,
\end{equation}
where $\eta_d=(v_0+u_0\sqrt{d})/2$ , with $v_0$ and $u_0$ the minimal positive integers satisfying $v_0^2-du_0^2=4$.
From A.10. in \cite{Bennett} we have that 
\begin{equation}
\label{7}
h(\sqrt{d} )\log \eta_d > 79.2177
\end{equation} when $4\cdot10^5\leqslant d \leqslant 10^7$.
Bennett et al.\ then compute that for all $(d, u_0)$, with $d > 10^7$ and $du_0^2< 2.65\cdot10^{10}$, we have $h(\sqrt{d} )\log \eta_d >417$.
Using their Sage \cite{sage} code and a longer computational time, we compute that for all $(d, u_0)$, with $d > 10^7$ and $  du_0^2\leq 7.5\cdot10^{10}$, we have $h(\sqrt{d} )\log \eta_d >412$. For this computation we used 1000 CPU for a total of approximately 1800 CPU hours. Calculations were performed on Raijin, a high-performance computer managed by NCI Australia.\\
Finally, remembering that $h(\sqrt{d})\geq 1$ and $\eta_d=(v_0+u_0\sqrt{d})/2$, we obtain for all $d > 10^7$ such that $ du_0^2\geq 7.5\cdot10^{10}$
\begin{equation}
\label{8}
 h(\sqrt{d} )\log \eta_d \geq\log u_0 \sqrt{d}  \geq \frac{1}{2}\log (7.5\cdot 10^{10}) \geq  12.52.
\end{equation}
Thus Theorem \ref{411} follows from (\ref{77}), using (\ref{7}) and (\ref{8}).\\
It is interesting to note that in order to improve the bound in (\ref{8}) we have to exponentially increase the range of $du_0^2$, this will make the computational time also increase exponentially.
\subsection{Upper bound for $\left| L'(\sigma,\chi) \right|$ and proof of Theorem \ref{41}}
\label{s2}
The main result used is the following one, that is Theorem 1 in \cite{Louboutin}, with the left-hand side sum starting from $4$. Note that this was not done in \cite{Bordignon} as a negative term would compensate for the exceeding positive terms.
\begin{theorem}
\label{L}
Take $\chi$ a even primitive Dirichlet character, with conductor $q$. Let $A:=\lfloor \sqrt{q}\rfloor-1$. Let $f$ be defined in $[4,\infty)$, $\searrow 0$ and such that $f(n)-2f(n+1)+f(n+2)\geq 0$ for all $n \geq 4$. Then, with $0 \leq \theta \leq 1$, 
\begin{equation*}
\left| \sum_4^{\infty} \chi(n) f(n)\right| \leq \left( \sum_4^A f(n) \right)-\frac{A}{2}f(A)+\frac{A}{2}\left\{f(A)-f(A+1)\right\}+\frac{1}{2}f(A+1)
\end{equation*}
\begin{equation*}
+\frac{\theta}{2}\left\{(A+1)(f(A+1)-f(A+2))+f(A+2)\right\}+ 18f(4)-12f(5).
\end{equation*}
\end{theorem}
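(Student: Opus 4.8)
The plan is to deduce Theorem~\ref{L} from the (known) version of Louboutin's estimate in which the sum starts at $n=1$, namely Theorem~1 of \cite{Louboutin}, by extending $f$ from $[4,\infty)$ to $[1,\infty)$ in a way that preserves both hypotheses and then paying for the three excised terms $n=1,2,3$. Concretely, I would keep $\tilde f(n)=f(n)$ for $n\geq 4$ and define
\[
\tilde f(n)=f(4)+(4-n)\bigl(f(4)-f(5)\bigr),\qquad n\in\{1,2,3\},
\]
i.e.\ the affine continuation of the first linear piece of $f$ (the segment joining $f(4)$ and $f(5)$) back to $n=1$. This gives $\tilde f(3)=2f(4)-f(5)$, $\tilde f(2)=3f(4)-2f(5)$ and $\tilde f(1)=4f(4)-3f(5)$, so that $\tilde f(1)+\tilde f(2)+\tilde f(3)=9f(4)-6f(5)$.

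First I would verify that $\tilde f$ satisfies the hypotheses of Louboutin's theorem on $[1,\infty)$. Since $f\searrow 0$ we have $f(4)\geq f(5)$, whence $\tilde f(1)\geq \tilde f(2)\geq \tilde f(3)\geq f(4)$ and $\tilde f\searrow 0$. For the discrete convexity the point is that $\tilde f$ is affine on $\{1,2,3,4,5\}$ with common slope $f(5)-f(4)$: this gives $\tilde f(n)-2\tilde f(n+1)+\tilde f(n+2)=0$ for $n=1,2$ and, at the junction, $\tilde f(3)-2f(4)+f(5)=0$, while for $n\geq 4$ the inequality is exactly the hypothesis imposed on $f$. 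Hence $\tilde f(n)-2\tilde f(n+1)+\tilde f(n+2)\geq 0$ for all $n\geq 1$. Choosing an affine (rather than an arbitrary convex) extension is precisely what forces equality at $n=3$ and keeps the penalty minimal.

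Next I would apply Theorem~1 of \cite{Louboutin} to $\tilde f$. Because $q>4\cdot 10^5$ forces $A=\lfloor\sqrt q\rfloor-1\geq 4$, every tail term involves $\tilde f$ only at $A,A+1,A+2$, where $\tilde f=f$; those terms therefore coincide with the corresponding terms in the statement. Splitting the leading sum as $\sum_{n=1}^{A}\tilde f(n)=\bigl(\tilde f(1)+\tilde f(2)+\tilde f(3)\bigr)+\sum_{n=4}^{A}f(n)$ then yields
\[
\Bigl|\,\sum_{n=1}^{\infty}\chi(n)\tilde f(n)\Bigr|\ \leq\ \bigl(\tilde f(1)+\tilde f(2)+\tilde f(3)\bigr)+R,
\]
where $R$ denotes the right-hand side asserted in Theorem~\ref{L} with the term $18f(4)-12f(5)$ removed.

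Finally I would pass back to the sum from $4$. Writing $\sum_{n=4}^{\infty}\chi(n)f(n)=\sum_{n=1}^{\infty}\chi(n)\tilde f(n)-\sum_{n=1}^{3}\chi(n)\tilde f(n)$ and using $|\chi(n)|\leq 1$ together with $\tilde f(n)\geq 0$, the triangle inequality produces a \emph{second} copy of $\tilde f(1)+\tilde f(2)+\tilde f(3)$. Adding the two copies gives $2\bigl(9f(4)-6f(5)\bigr)=18f(4)-12f(5)$, which is exactly the extra term in the statement, so the bound follows. The only genuinely substantive ingredient is Louboutin's Theorem~1 itself (the even-character summation-by-parts estimate), which I am taking as given via \cite{Louboutin}; within this reduction the single delicate point is the convexity check at the junction $n=3$ (handled by the affine extension) together with the requirement $A\geq 4$, and everything else is bookkeeping of the boundary constants.
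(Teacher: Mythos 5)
Your reduction is correct, and it is a genuinely different route from the paper's. The paper does not invoke Louboutin's Theorem~1 as a black box: it reruns his double summation by parts with $S(n)=\sum_{a\le n}\sum_{k\le a}\chi(k)$, starting the identity $\chi(n)=S(n)-2S(n-1)+S(n-2)$ at $k=4$ instead of $k=1$, and then bounds the resulting boundary terms $-2f(4)S(3)+f(4)S(2)+f(5)S(3)$ together with the shifted polynomial weights; that is where $18f(4)-12f(5)$ comes from. You instead extend $f$ affinely back to $n=1,2,3$ along the chord through $(4,f(4))$ and $(5,f(5))$, apply Louboutin's theorem to $\tilde f$, and remove the first three terms by the triangle inequality; the two copies of $\tilde f(1)+\tilde f(2)+\tilde f(3)=9f(4)-6f(5)$ reproduce exactly the paper's constant. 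Your key verifications are the right ones: the affine choice makes $\tilde f(n)-2\tilde f(n+1)+\tilde f(n+2)$ vanish at $n=1,2,3$ (note this is only \emph{discrete} convexity, which is all Louboutin's argument uses -- a genuinely convex $f$ spliced to its chord need not be convex as a real function at the junction, so it matters that the hypothesis is stated pointwise at integers), and the tail terms only see $\tilde f$ at $A,A+1,A+2$ where $\tilde f=f$, which requires $A\ge 4$ -- a condition implicit in the paper's statement as well (it needs $\sum_4^A$ to make sense) and guaranteed in the intended range $q>4\cdot 10^5$. What your approach buys is modularity: Louboutin's proof never has to be reopened, and the same device would shift the starting index to any $k$ with penalty $2\sum_{n<k}\tilde f(n)$. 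What the paper's approach buys is transparency about where the constant originates (the trivial bounds $|S(2)|\le 3$, $|S(3)|\le 6$ inside the summation by parts), which is what lets the author remark that starting at $4$ is advantageous here because no negative term is sacrificed.
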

\begin{proof}
We will follow the proof of \cite[Theorem 1]{Louboutin}.\\
Define $S(n)=\sum_{a=1}^n \sum_{k=1}^a \chi(k)$.
We have 
\begin{equation*}
\chi(n)=S(n)-2S(n-1)+S(n-2),
\end{equation*}
and, with $a_n =f(n)-2f(n+1)+f(n+2)$, this gives
\begin{equation}
\label{eq1}
\sum_{n\ge k} f(n) \chi(n)= \sum_{n\ge k} a_n S(n) -2f(k)S(k-1)+f(k)S(k-2)+f(k+1)S(k-1).
\end{equation}
For $k\leq A$,
\begin{equation*}
\sum_{n\ge k} a_n S(n) = \sum_{k}^A a_n S(n)+\sum_{n>A} a_n S(n).
\end{equation*}
Now, using that $S(n)\le \frac{n(n+1)}{2}$ and $f(n)-2f(n+1)+f(n+2)\geq 0$, we have
\begin{equation*}
\left| \sum_{k}^A a_n S(n)\right|\leq  \sum_{k}^A \frac{n(n+1)}{2} a_n =  \sum_{k+2}^A  f(n)+\frac{(k+1)(k+2)}{2}f(k+1)+
\end{equation*}
\begin{equation*}
+\frac{k(k+1)}{2}f(k)-2 \frac{k(k+1)}{2}f(k+1) -\frac{A(A+3)}{2}f(A+1)+\frac{A(A+1)}{2}f(A+2)
\end{equation*}
The above, together with (\ref{eq1}), gives
\begin{equation*}
\left| \sum_{n\ge k} f(n) \chi(n) \right|\le   \sum_{k+2}^A  f(n)+\frac{(k+1)(k+2)}{2}f(k+1)+\frac{k(k+1)}{2}f(k) +
\end{equation*}
\begin{equation*}
-2 \frac{k(k+1)}{2}f(k+1)+\left|-2f(k)S(k-1)+f(k)S(k-2)+f(k+1)S(k-1)\right|+
\end{equation*}
\begin{equation*}
-\frac{A(A+3)}{2}f(A+1)+\frac{A(A+1)}{2}f(A+2)+\left| \sum_{n>A} a_n S(n)\right|.
\end{equation*}
Now, with $k=4$, the result follows as in Louboutin's proof.
\end{proof}
Here we assume $\sigma \in (\beta_0, 1)$ and $\beta_0 \geqslant 1-\frac{c}{ \sqrt{q}\log^2 q}$, with $c \in [100,1000]$, to be chosen later, and $q \geq 4\cdot 10^5$.\\
Now we apply Theorem \ref{L} to the function
\begin{equation*}
f(n)=\frac{\log n}{n^{\sigma}},
\end{equation*}
that, for $n\geq 4$, results decreasing and such that $f(n)-2f(n+1)+f(n+2)\geq 0$, as $f(4)-2f(4+1)+f(4+2)\geq 0$ and $f$ is convex for $n \geq 5$.
We denote with $R(A,\sigma)$ the term in the right hand side of the formula in Theorem \ref{L}.
With $d \leq A$, we further obtain by partial summation
\begin{equation*}
\sum_1^A \frac{\log n}{n}\leq \frac{1}{2}\log^2 A  - \frac{1}{2}\log^2 d +\sum_{n=2}^{d} \frac{\log n}{n},
\end{equation*}
and fixing $d= 100$ 
 \begin{equation*}
 - \frac{1}{2}\log^2 d +\sum_{n=2}^{d} \frac{\log n}{n}< 0.
\end{equation*}
This number is so small that we omit it in what follows.
Thus
\begin{equation}
\label{ub}
\left| L'(\sigma, \chi)\right| \leq q^{\frac{1-\beta_0}{2}}\frac{1}{8}\log^2 q+R(A, \sigma).
\end{equation}
Remembering $\beta_0 \geqslant 1-\frac{c}{ \sqrt{q}\log^2 q}$ and choosing $c=100$  it is easy to see, for all $q \geq  4\cdot 10^5$ and $\sigma \in (\beta_0, 1)$, that
\begin{equation*}
\left| L'(\sigma,\chi)\right|  \leqslant \frac{1}{8} \log^2 q,
\end{equation*}
this proves Theorem \ref{40}.
Now Theorem \ref{41} follows easily. We just need to prove the theorem for primitive real characters, indeed if $\chi \pmod{q}$ is induced by some primitive real character $\chi' \pmod{q'}$, then the primitive case yields
\begin{equation*}
\beta_0 \leq 1-\frac{\lambda}{\sqrt{q'}\log^2 q'}\leq 1-\frac{\lambda}{\sqrt{q}\log^2 q}.
\end{equation*}
Thus Theorem \ref{41} follows from (\ref{2}), Theorem \ref{40} and Theorem \ref{411}.\\
We can conclude proving a more ``precise" version of theorem \ref{41}. From (\ref{2})-(\ref{ub}), we obtain $\beta_0 \leq 1-\frac{c}{ \sqrt{q}\log^2 q}$, with the following values for $c$ and ranges for $q\geq 4\cdot 10^5$.
\begin{center}
    \begin{tabular}{| l | l | }
    \hline
    $  q $ & $c$ \\ \hline
    $ q \leq 7\cdot 10^5$ & $624$  \\ \hline
    $ 7\cdot 10^5 \leq q \leq 10^6$ & $ 636$  \\ \hline
    $  10^6 \leq q \leq 3\cdot10^6$ & $641$  \\ \hline
    $ 3\cdot 10^6 \leq q \leq 8\cdot10^6$ & $ 654$  \\ \hline
    $ 8\cdot 10^6\leq q \leq 10^7 $ & $660$  \\ \hline

    \end{tabular}
    \quad
    \begin{tabular}{| l | l | }
    \hline
    $  q $ & $c$ \\ \hline
    $q \leq 10^{12}$ & $105$  \\ \hline
    $q \leq  10^{18}$ & $104$  \\ \hline
    $q \leq 10^{26}$ & $103$  \\ \hline
    $q \leq 10^{43}$ & $102$   \\ \hline
    $q \leq 10^{100}$ & $101$   \\ \hline

    \end{tabular}
   
\end{center}
Note that the drastic decrease of $c$ when $q > 10^7$ is due to the difference between (\ref{7}) and (\ref{8}). 

\section*{Acknowledgements} 
I would like to thank my supervisor Tim Trudgian for his kind help and his sharp suggestions in developing this paper, Prof.\ Ryotaro Okazaki for the suggestion to read Hua's paper \cite{Hua}, Prof.\ Olivier Ramar\'e for the interesting comments and Alberto Sanchez Muzas for the help with the computational part. I would also like to thank the NCI and UNSW Canberra for the computational time. This research was undertaken with the assistance of resources and services from the National Computational Infrastructure (NCI), which is supported by the Australian Government.

\end{document}